\newcommand{\banacha}{X}
\newcommand{\banachb}{Y}
\newtheorem{theorem}{Theorem}
\newtheorem{lemma}[theorem]{Lemma}
\newtheorem{corollary}[theorem]{Corollary}
\newtheorem{proposition}[theorem]{Proposition}
\newcommand{\norm}[1]{\left\|#1\right\|}
\newcommand{\normq}[1]{\norm{#1}^2}
\newcommand{\modulo}[1]{\left|#1\right|}
\begin{document}
\title{Kantorovich's  Theorem on Newton's Method}

\author{ O. P. Ferreira\thanks{IME/UFG, Campus II- Caixa
    Postal 131, CEP 74001-970 - Goi\^ania, GO, Brazil (E-mail:{\tt
      orizon@mat.ufg.br}). The author was supported in part by
    FUNAPE/UFG, PADCT-CNPq, PRONEX--Optimization(FAPERJ/CNPq), CNPq Grant 302618/2005-8,
    CNPq Grant 475647/2006-8 and
    IMPA.}  
  \and B. F. Svaiter\thanks{IMPA, Estrada Dona Castorina,
    110, Jardim Bot\^anico, CEP 22460-320 - Rio de Janeiro, RJ, Brazil
    (E-mail:{\tt benar@impa.br}). The author was supported in part by
    CNPq Grant 301200/93-9(RN), CNPq Grant 475647/2006-8 and by
    PRONEX--Optimization(FAPERJ/CNPq).}
}

\date{March 09, 2007}

\maketitle
\begin{abstract}
  In this work we present a simplifyed proof of Kantorovich's Theorem
  on Newton's Method.  This analysis uses a  technique which has
  already been used for obtaining new extensions of this theorem.

\noindent
\textsc{AMSC: 49M15, 90C30.}
\end{abstract}
\section{Introduction} \label{sec:int}
Kantorovich's Theorem assumes semi-local conditions to ensure
existence and uniqueness of a solution of a nonlinear equation
$F(x)=0$, where $F$ is a differentiable application between Banach
spaces~\cite{k48,bl8,O-AMM,Tapia}.  This theorem uses
\emph{constructively} Newton method and also guarantee convergence to
a solution of this iterative procedure.  Apart from the elegance of
this theorem, it has many theoretical and practical applications,
in~\cite{P04-2} we can find a reviews of recent applications and
in~\cite{potra2005} an application in interior point methods.  This
theorem has also many extensions, some of then encompassing previously
unrelated results see~\cite{ABM2004,W99}.  Some of these
generalizations and extensions are quite recent, because in the last
few year the Kantorovich's Theorem has been the subject of intense
research, see~\cite{ABM2004,FS02,FS06,P04-2,potra2005,W99}.

The aim of this paper is to present a new technique for the analysis of the
Kantorovich's Theorem.  This technique, was introduced in \cite{FS02} and
since then it has been used for obtaining new extensions of
Kantorovich's Theorem see~\cite{ABM2004,FS06}.  Here, it will be used to
present a simplified proof of its ``classical'' formulation.

The main idea  is to define ``good'' regions for Newton
method, by comparing the nonlinear function $F$ with its scalar
majorant function.  Once these good regions are obtained, an invariant
set for Newton method is also obtained and there,   Newton iteration
can be repeated indefinitely.

The following notation is used throughout our presentation.  Let
$\banacha$ be a Banach space. The open and closed ball at $x\in X$ are
denoted, respectively by
$$
B(x,r) = \{ y\in X ;\; \|x-y\|<r \}\;\;\; \mbox{and}\;\;\;
B[x,r] = \{ y\in X ;\; \|x-y\|\leqslant r\}.
\,
$$
For the Frechet derivative of a mapping $F$ we use the notation $F'$ and for the Dual space of $X$ we use $X^*$.

First, let us recall Kantorovich's  theorem on Newton's
method in its classical formulation, see~\cite{GT74,bl8,OR70,O96,potra2005}.

\begin{theorem}\label{th:knclass}
  Let $\banacha$, $\banachb$ be Banach spaces, $C\subseteq \banacha$
  and $F:{C}\to \banachb$ a continuous function, continuously
  differentiable on $\mathrm{int}(C)$. Take $x_0\in \mathrm{int}(C)$,
  $L,\, b>0$ and suppose that
  \begin{description}
  \item [1)]\hspace{2em} $F '(x_0)$ is non-singular,
  \item  [2)]\hspace{2em}
    \( \|
    F'(x_0)^{-1}\left[ F'(y)-F'(x)\right]
    \| \leq L\|x-y\|
    \)\;\;  for any $x,y\in C$,
  \item [3)] \hspace{2em}$ \|F'(x_0)^{-1}F(x_0)\|\leq b$,
  \item [4)]\hspace{2em} $2bL\leq 1$.
  \end{description}
  Define
  \begin{equation}
    \label{eq:scroots}
    t_*:=\frac{1-\sqrt{1-2bL}}{L},\qquad
    t_{**}:=\frac{1+\sqrt{1-2bL}}{L}. 
  \end{equation}
  If
  \[
  B[x_0,t_*]\subset C,
  \]
  then  the sequences $\{x_k\}$ generated by Newton's Method for
  solving $F(x)=0$ with starting point $x_0$,
  \begin{equation} \label{ns.KT} 
    x_{k+1} ={x_k}-F'(x_k) ^{-1}F(x_k), \qquad k=0,1,\cdots, 
  \end{equation}
  is well defined, is contained in $B(x_0, t_*)$, converges to a
  point $x_*\in B[x_0,t_*]$ which is the unique zero of $F$ in
  $B[x_0,t_*]$ and 
  \begin{equation}
    \label{eq:q.conv.x}
    \|x_*-x_{k+1}\|\leq \frac{1}{2} \|x_*-x_k \|, \qquad
    k=0,1,\,\cdots. 
  \end{equation}
   Moreover, if assumption {\bf 4} holds as an strict inequality, i.e.\
   $2bL<1$, then
   \begin{equation} \label{eq:q2}
     \|x_*-x_{k+1}\|\leq\frac{1-\theta^{2^k}}{1+\theta^{2^k}}
     \frac{ L}{2\sqrt{1-2bL}}\|x_*-x_k\|^2\leq
     \frac{ L}{2\sqrt{1-2bL}}\|x_*-x_k\|^2, \quad k=0,1,\cdots, 
   \end{equation}
  where $\theta:=t_*/t_{**}<1$, and $x_*$ is the
  unique zero of $F$ in $B[x_0,\rho]$ for any $\rho$ such that
  \[ t_*\leq\rho<t_{**},\qquad B[x_0,\rho]\subset C.\]
\end{theorem}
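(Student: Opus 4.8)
The plan is to follow the "majorant function" strategy announced in the introduction: introduce the scalar function $f(t) = (L/2)t^2 - t + b$, whose roots are exactly $t_*$ and $t_{**}$ from \eqref{eq:scroots}, and run the scalar Newton iteration $t_{k+1} = t_k - f(t_k)/f'(t_k)$ with $t_0 = 0$ in parallel with the Banach-space iteration \eqref{ns.KT}. The key elementary facts I would first establish about $f$ are: $f$ is strictly convex, $f(0) = b > 0$, $f'(0) = -1 < 0$, $f$ is strictly decreasing on $[0,t_*]$, the scalar sequence $\{t_k\}$ is strictly increasing, stays in $[0,t_*)$, and converges $q$-linearly to $t_*$ with $t_* - t_{k+1} \le \frac12(t_* - t_k)$ (when $2bL < 1$ one gets the sharper quadratic bound $t_* - t_{k+1} = \frac{L}{2|f'(t_k)|}(t_* - t_k)^2$ and the $\theta^{2^k}$-expression by solving the recursion explicitly). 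These are one-variable calculus computations and I would not belabor them.

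Next I would prove, by induction on $k$, the coupled statement: $x_k$ is well defined, $\|x_k - x_0\| \le t_k < t_*$, $F'(x_k)$ is nonsingular with $\|F'(x_k)^{-1}F'(x_0)\| \le -1/f'(t_k)$ (via a Banach lemma / Neumann series argument using hypothesis 2 and the estimate $\|F'(x_0)^{-1}[F'(x_k) - F'(x_0)]\| \le L t_k = f'(t_k) + 1 < 1$), and the central \emph{majorization inequality}
\[
\|x_{k+1} - x_k\| \le t_{k+1} - t_k .
\]
The last bound is the heart of the argument: writing $F(x_k) = F(x_k) - F(x_{k-1}) - F'(x_{k-1})(x_k - x_{k-1})$ and estimating this "linearization remainder" by the integral $\int_0^1 \|F'(x_0)^{-1}[F'(x_{k-1} + s(x_k - x_{k-1})) - F'(x_{k-1})]\|\,ds \le \frac{L}{2}\|x_k - x_{k-1}\|^2 \le \frac{L}{2}(t_k - t_{k-1})^2 = f(t_k)$, one gets $\|F'(x_0)^{-1}F(x_k)\| \le f(t_k)$, and then $\|x_{k+1} - x_k\| = \|F'(x_k)^{-1}F(x_k)\| \le \|F'(x_k)^{-1}F'(x_0)\|\,\|F'(x_0)^{-1}F(x_k)\| \le -f(t_k)/f'(t_k) = t_{k+1} - t_k$. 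Since $\sum_k (t_{k+1} - t_k) = t_* < \infty$, the sequence $\{x_k\}$ is Cauchy, hence converges to some $x_* \in B[x_0, t_*]$; continuity of $F$ and $F'$ together with $F(x_k) = -F'(x_k)(x_{k+1} - x_k) \to 0$ give $F(x_*) = 0$.

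Then I would transfer the scalar convergence rates to the Banach-space sequence. For the linear rate \eqref{eq:q.conv.x} and the quadratic rate \eqref{eq:q2}, the clean route is to bound $\|x_* - x_k\| \le t_* - t_k$ (obtained by letting the index run to infinity in the telescoping estimate $\|x_m - x_k\| \le t_m - t_k$) and then combine this with a Newton-increment estimate $\|x_* - x_{k+1}\| \le \frac{L}{2|f'(t_k)|}\|x_* - x_k\|^2$, which follows by the same linearization-remainder computation applied to the pair $x_k, x_*$ using $F(x_*) = 0$; feeding in $\|x_* - x_k\| \le t_* - t_k$ and the scalar identity for $t_* - t_{k+1}$ yields both inequalities in \eqref{eq:q2}, and \eqref{eq:q.conv.x} is the weakened form valid even in the boundary case $2bL = 1$. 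Finally, for uniqueness in $B[x_0,\rho]$ with $t_* \le \rho < t_{**}$: if $y$ is another zero, I would apply the remainder estimate at $x_k$ and $y$ to get $\|y - x_{k+1}\| \le \frac{L}{2|f'(t_k)|}\|y - x_k\|^2$ and show inductively $\|y - x_k\| \le \rho - t_k$ using convexity of $f$ on $[t_*, \rho]$ — here one needs $f(\rho) \le 0$, i.e. $\rho < t_{**}$ — so $x_k \to y$, forcing $y = x_*$.

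The main obstacle I anticipate is getting the bookkeeping of the simultaneous induction exactly right — in particular making sure the nonsingularity of $F'(x_k)$ and the operator-norm bound $\|F'(x_k)^{-1}F'(x_0)\| \le -1/f'(t_k)$ are in hand \emph{before} they are used to produce $x_{k+1}$, and keeping careful track of which estimates need $B[x_0,t_*] \subset C$ so that all the segments $[x_{k-1}, x_k]$ and $[x_k, x_*]$ lie in $C$ where $F'$ is defined and Lipschitz. The analytic ingredients (Banach perturbation lemma, fundamental theorem of calculus for the remainder, monotone-convergence of the scalar recursion) are all routine; the value of the "majorant" viewpoint is precisely that it reduces every Banach-space estimate to a transparent statement about the convex scalar function $f$.
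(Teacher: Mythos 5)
Your construction of the iterates, the coupled induction giving $\|x_k-x_0\|\le t_k$, $\|F'(x_k)^{-1}F'(x_0)\|\le -1/f'(t_k)$, $\|F'(x_0)^{-1}F(x_k)\|\le f(t_k)$ and the majorization $\|x_{k+1}-x_k\|\le t_{k+1}-t_k$, the convergence to a zero $x_*\in B[x_0,t_*]$, the transfer of the scalar rates to \eqref{eq:q.conv.x} and \eqref{eq:q2}, and uniqueness in $B[x_0,t_*]$ are all sound and essentially coincide with the paper's argument (your induction is the paper's invariant-set lemma for $K(t)=\{x\in B[x_0,t]:\|F(x)\|\le f(t)\}$ unrolled; keeping $F'(x_0)^{-1}$ explicit instead of normalizing $F'(x_0)=I$ is immaterial). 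The genuine gap is in the final step, uniqueness in $B[x_0,\rho]$ for $t_*<\rho<t_{**}$. Your induction does prove $\|y-x_k\|\le\rho-t_k$ (the inductive step is exactly the inequality $f(\rho)\le 0\Rightarrow \frac{L}{2|f'(t_k)|}(\rho-t_k)^2\le\rho-t_{k+1}$), but this bound converges to $\rho-t_*>0$, not to $0$; it only yields $\|y-x_*\|\le\rho-t_*$ and does not force $x_k\to y$. So the conclusion ``hence $y=x_*$'' is unjustified precisely in the range of $\rho$ where the extended uniqueness statement has content (for $\rho=t_*$ your bound does tend to zero, which is why the small-ball uniqueness goes through). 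Note also that $f(\rho)\le0$ is equivalent to $t_*\le\rho\le t_{**}$, not to $\rho<t_{**}$: the strict inequality must enter the argument in an essential way.

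Two standard repairs. The paper's route (Corollary \ref{uniq}) avoids induction altogether: apply the linearization estimate once at $x_0$ (with $F'(x_0)=I$) to any zero $y\in B[x_0,\rho]$ to get $\|F(x_0)+y-x_0\|\le\frac{L}{2}\|y-x_0\|^2$, which combined with $\|F(x_0)\|\le b$ gives $f(\|y-x_0\|)\ge 0$; since $f<0$ on $(t_*,t_{**})$ and $\|y-x_0\|\le\rho<t_{**}$, this forces $\|y-x_0\|\le t_*$, and the already-proved uniqueness in $B[x_0,t_*]$ finishes. Alternatively, sharpen your induction: the majorizing errors $d_k$ defined by $d_0=\rho$, $d_{k+1}=\frac{L}{2|f'(t_k)|}d_k^2$ dominate $\|y-x_k\|$, while $e_k:=t_{**}-t_k$ satisfies the same recursion $e_{k+1}=\frac{L}{2|f'(t_k)|}e_k^2$; hence $d_k/e_k=(\rho/t_{**})^{2^k}$ and $d_k\le t_{**}\,(\rho/t_{**})^{2^k}\to 0$, which does force $y=x_*$. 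With either repair your proposal becomes a complete proof along the same lines as the paper.
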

\noindent
Note that under assumption {\bf 1}-{\bf 4}, convergence of $\{x_k\}$
is $Q$-linear, according to \eqref{eq:q.conv.x}. The additional 
assumption $2bL<1$ guarantee $Q$-quadratic convergence, according to
\eqref{eq:q2}.  This additional assumption also guarantee that $x_*$ is
the unique zero of $F$ in $B(x_0,t_{**})$, whenever $B(x_0,t_{**})\subset C$.

>From now on, we assume that the hypotheses of Theorem \ref{th:knclass}
hold, with the exception of $2bL< 1$ which will be considered to
hold only when explicitly stated.

\section{ Kantorovich's Theorem for a scalar quadratic function}
In this section we  analyze Newton method applied to solve the 
scalar equation $f(t)=0$, for   $f:\mathbb{R}\to \mathbb{R}$  
\begin{equation}
  \label{eq:def.f}
  f(t)= \frac{L}{2}t^2-t+b.
\end{equation}
The analysis to be performed can also be viewed as Kantorovich's
theorem for function $f$.  This function and the sequence generated by
Newton method for solving $f(t)=0$ with starting point $t_0$,
\begin{equation}
  \label{eq:def.t0}
   t_0:=0,
\end{equation}
both will play an important rule in the analysis of
Theorem~\ref{th:knclass}.

Note that the assumptions of Theorem \ref{th:knclass} are satisfied
in the \emph{very} particular case
 $F=f$, $X=Y=C=\mathbb{R}$, $x_0=t_0$.  The roots of $f$ are $t_{*}$ and
$t_{**}$, as defined in \eqref{eq:scroots}.  As $b,\, L>0$,
\[
 0<t_{*}\leq t_{**},
\]
with strict inequality between $t_*$ and $t_{**}$ if and only if $2bL<1$. 
Hence
\begin{itemize}
\item $t_*$ is the unique root of $f$ in $B[t_0, t_*]$,
\item if $2bL<1$, then $t_*$ is the unique root of $f$ in $B(t_0, t_{**})$.
\end{itemize}
So, the existence  and uniqueness part of Theorem
\ref{th:knclass} for zeros of $f$ holds.
\begin{proposition}
  \label{pr:scalar.1}
  The scalar function $f$ has a smallest nonnegative root 
  $t_*\in (0,1/L]$.  Moreover, for any $t\in [0,t_*)$
  \[
  f(t)>0, \qquad f'(t)\leq  L(t-t_*)<0 .
  \]
\end{proposition}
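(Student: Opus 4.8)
The plan is to work directly from the closed forms in \eqref{eq:scroots} together with the explicit formula \eqref{eq:def.f}, since for a scalar quadratic everything can be read off by hand. First I would record that assumption {\bf 4}, $2bL\le 1$, makes the discriminant $1-2bL$ nonnegative, so $f$ really has the two real roots $t_*,t_{**}$ displayed in \eqref{eq:scroots}. Because $b,L>0$ we have $0<2bL$, hence $\sqrt{1-2bL}<1$, so $1-\sqrt{1-2bL}>0$ and therefore $t_*>0$; on the other hand $\sqrt{1-2bL}\ge 0$ gives $1-\sqrt{1-2bL}\le 1$, so $t_*\le 1/L$. This already yields $t_*\in(0,1/L]$, and it identifies $t_*$ as the smallest nonnegative root since $0<t_*\le t_{**}$.

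Next I would handle the derivative estimate by straightforward differentiation. From \eqref{eq:def.f}, $f'(t)=Lt-1$, and using the explicit value of $t_*$,
\[
f'(t)-L(t-t_*)=Lt_*-1=-\sqrt{1-2bL}\le 0 .
\]
Hence $f'(t)\le L(t-t_*)$ for every $t\in\mathbb{R}$, and in particular for $t\in[0,t_*)$ the right-hand side $L(t-t_*)$ is strictly negative, which gives the claimed chain $f'(t)\le L(t-t_*)<0$ on $[0,t_*)$.

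Finally, for the positivity of $f$ on $[0,t_*)$ I would use the factorization $f(t)=\tfrac{L}{2}(t-t_*)(t-t_{**})$, which is immediate from \eqref{eq:def.f} and \eqref{eq:scroots} (equivalently, from the product/sum relations for the roots). For $0\le t<t_*\le t_{**}$ both factors $t-t_*$ and $t-t_{**}$ are negative, so their product is positive and, multiplied by $L/2>0$, gives $f(t)>0$. As an alternative I could integrate the bound $f'(s)\le L(s-t_*)<0$ from $t$ to $t_*$ and use $f(t_*)=0$, which yields even the quantitative lower bound $f(t)\ge \tfrac{L}{2}(t-t_*)^2>0$.

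I do not expect any genuine obstacle here: the whole statement reduces to elementary algebra of a quadratic and its two roots. The only point requiring a little care is bookkeeping the sign and strictness of $\sqrt{1-2bL}$ — in particular the strict inequality $\sqrt{1-2bL}<1$, which is precisely where the strict positivity of $b$ and $L$ enters and which is what forces $t_*>0$ rather than $t_*\ge 0$.
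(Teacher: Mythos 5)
Your proposal is correct and follows essentially the same elementary route as the paper: read $t_*\in(0,1/L]$ off the closed form \eqref{eq:scroots} using $b,L>0$ and $2bL\le 1$, and note that $f'(t)\le L(t-t_*)$ is just the inequality $t_*\le 1/L$ (the paper writes $f'(t)=L(t-1/L)\le L(t-t_*)$, you write $f'(t)-L(t-t_*)=Lt_*-1=-\sqrt{1-2bL}\le 0$, which is the same fact). The only cosmetic difference is in the positivity of $f$ on $[0,t_*)$: the paper argues from $f(0)=b>0$ together with $t_*$ being the smallest nonnegative root, while you use the factorization $f(t)=\tfrac{L}{2}(t-t_*)(t-t_{**})$; both are fine.
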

\begin{proof}
  For the first statement, it remains to prove that $t_*\leq 1/L$,
  which is a trivial consequence of the assumptions on $b$ and $L$.

  As $f(0)>0$, $f$ shall be strictly positive in $[0,t_*)$.  For the
  last inequalities, use the inequality $t_*\leq 1/L$ and  \eqref{eq:def.f}
  to obtain
  \[
  f'(t)=Lt-1=L(t-1/L)\leq L(t-t_*).
  \]
  Now, the last inequality follows directly  from the assumption  $t<t_*$.
\end{proof}
According to Proposition \ref{pr:scalar.1}, $f'(t)\neq 0$ for all
$t\in[0,t_*)$.  Therefore, Newton iteration is well defined in
$[0,t_*)$. Let us call it $n_f$,
\begin{equation} \label{eq:def.nfs}
  \begin{array}{rcl}
  n_f:[0,t_*)&\to& \mathbb{R}\\
    t&\mapsto& t-f(t)/f'(t).
  \end{array}
\end{equation}
Note that, up to now,  only \emph{one single iteration} of newton
method is well defined in $[0,t_*)$.  In principle,  Newton iteration  
could map some $t\in[0,t_*)$ in to $1/L$.  In such a case, the second iterate
for  $t$ would be not defined.

Now, we shall prove that Newton iteration can be repeated indefinitely
at any starting point in $[0,t_*)$.
\begin{proposition} \label{pr:2}
  For any $t\in [0,t_*)$
  \[
  t_*-n_f(t)=-\frac{L}{2f'(t)}(t_*-t)^2, \qquad
  t<n_f(t)<t_*.
  \]
  In particular, $n_f$ maps  $[0,t^*)$ in
  $[0,t^*)$.
\end{proposition}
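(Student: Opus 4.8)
The plan is to establish the identity first and then use it, together with Proposition~\ref{pr:scalar.1}, to derive the two-sided bound. For the identity, I would start from the definition $n_f(t) = t - f(t)/f'(t)$, so that
\[
t_* - n_f(t) = (t_* - t) + \frac{f(t)}{f'(t)} = \frac{(t_*-t)f'(t) + f(t)}{f'(t)}.
\]
Now the numerator should be computed explicitly using $f(t) = \tfrac{L}{2}t^2 - t + b$ and $f'(t) = Lt - 1$, together with the fact that $f(t_*) = 0$. The cleanest route is to Taylor-expand $f$ around $t$: since $f$ is quadratic, $f(t_*) = f(t) + f'(t)(t_* - t) + \tfrac{L}{2}(t_* - t)^2$ exactly (the second-order remainder is exact because $f'' \equiv L$). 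Since $f(t_*) = 0$, this rearranges to $f(t) + f'(t)(t_* - t) = -\tfrac{L}{2}(t_*-t)^2$, which is exactly the numerator above. Dividing by $f'(t)$ gives the claimed identity $t_* - n_f(t) = -\dfrac{L}{2f'(t)}(t_*-t)^2$.

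Next I would derive $t < n_f(t) < t_*$ from the identity. By Proposition~\ref{pr:scalar.1}, for $t \in [0, t_*)$ we have $f(t) > 0$ and $f'(t) < 0$ (indeed $f'(t) \le L(t - t_*) < 0$). The right inequality $n_f(t) < t_*$ follows because $-\tfrac{L}{2f'(t)}(t_*-t)^2 > 0$ when $f'(t) < 0$ and $t \ne t_*$, so $t_* - n_f(t) > 0$. For the left inequality $t < n_f(t)$, note $n_f(t) - t = -f(t)/f'(t)$, which is positive since $f(t) > 0$ and $f'(t) < 0$. (At $t = 0$, one gets $n_f(0) - 0 = -f(0)/f'(0) = b > 0$, consistent with $t_0 = 0$.) Combining, $t \le n_f(t) < t_*$, with $t < n_f(t)$ strict since $f(t) > 0$ strictly on $[0, t_*)$.

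Finally, the ``in particular'' clause is immediate: if $t \in [0, t_*)$ then $n_f(t) \in (t, t_*) \subseteq [0, t_*)$, so $n_f$ maps $[0, t_*)$ into itself; in fact it maps into $(0, t_*)$. I do not anticipate a genuine obstacle here — the only thing to be careful about is invoking the exact (not approximate) second-order Taylor expansion, which is legitimate precisely because $f$ is quadratic with $f'' \equiv L$; writing the remainder as an approximation or an inequality would weaken the identity to an inequality and lose the quadratic-convergence constant later. A secondary point worth stating cleanly is that division by $f'(t)$ is valid exactly because Proposition~\ref{pr:scalar.1} guarantees $f'(t) \ne 0$ on $[0, t_*)$, which is also what makes $n_f$ well defined there in the first place.
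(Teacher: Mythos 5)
Your proof is correct and follows essentially the same route as the paper: the exact second-order expansion of the quadratic $f$ at $t$ evaluated at $t_*$ (using $f(t_*)=0$), division by $f'(t)$, and then the sign information $f(t)>0$, $f'(t)<0$ from Proposition~\ref{pr:scalar.1} to get $t<n_f(t)<t_*$ and hence the invariance of $[0,t_*)$. No gaps; the remarks about the exactness of the expansion and the legitimacy of dividing by $f'(t)$ match the paper's reasoning.
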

\begin{proof}
  Take $t\in [0,t_*)$.  As $f$ is a second-degree polynomial and $f(t_*)=0$,
  \[ 
  0=f(t)+f'(t)(t_*-t) +\frac{L}{2}(t_*-t)^2.
  \]
  Dividing by $f'(t)$ we obtain, after direct rearranging 
  \[
    t_*-t+f(t)/f'(t)=-\frac{L}{2f'(t)}(t_*-t)^2 .
  \]
  Note that, by \eqref{eq:def.nfs}, the left hand side of the above
  equation is $t_*-n_f(t)$, which proves the first equality.
  
  Using Proposition \ref{pr:scalar.1} we have $f(t)>0$ and $f'(t)<0$.  
  Combining these inequalities with definition
  \eqref{eq:def.nfs} and the first equality in the proposition,
  respectively, we obtain $t<n_f(t)<t_*$.
  The last statement of the Proposition follows directly from these
   inequalities.
\end{proof}
Proposition~\ref{pr:2} shows, in particular, that
for any $t\in [0,t_*)$, the sequence $\{n_f ^k(t)\}$,
\[
 n_f^{0}(t)=t,\quad n_f^{k+1}(t)= n_f\left(n_f^k \left(t\right)\right),
  \qquad k=0,1,\cdots.
\]
is well defined, strictly increasing, remains in $[0,t_*)$ and so, is
convergent.  Therefore, Newton method for solving $f(t)=0$ with starting
point $t_0=0$ ( see \eqref{eq:def.t0}) generates an infinite sequence
$\{t_k=n_f^k(t_0)\}$, which can be also defined as
\begin{equation}\label{eq:tknk}
  t_0=0,\quad t_{k+1}=n_f(t_k), \qquad k=0,1,\cdots.
\end{equation}
As we already observed, this sequence is strictly increasing, remains
in $[0, t_*)$ and converges.
\begin{corollary} \label{cr:kanttk} The sequence $\{t_k\}$ is well
  defined, strictly increasing and is contained in $[0,t_*)$.
  Moreover, it converges $Q$-linearly to $t_*$, as follows
  \begin{equation}\label{eq:q.conv.f}
    t_*-t_{k+1}=\frac{L}{-2f'(t_k)}(t_*-t_k)^2\leq \frac{1}{2} (t_*-t_k), \qquad
    k=0,1,\,\cdots.
  \end{equation}
  If $2bL< 1$, then the sequence $\{t_k\}$ converge
  $Q$-quadratically as follows
  \begin{equation} \label{eq:q2f}
    t_*-t_{k+1}=\frac{1-\theta^{2^k}}{1+\theta^{2^k}}
    \frac{ L}{2\sqrt{1-2bL}}(t_*-t_k)^2\leq
    \frac{ L}{2\sqrt{1-2bL}}(t_*-t_k)^2,
    \qquad k=0,1,\cdots,
  \end{equation}
  where $\theta=t_*/t_{**}<1$.
\end{corollary}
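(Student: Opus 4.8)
The plan is to establish Corollary~\ref{cr:kanttk} by stringing together the facts already proven about $n_f$ and then doing a small amount of extra work to convert the $Q$-linear estimate into the sharper $Q$-quadratic one when $2bL<1$.

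First I would dispatch the easy parts. That $\{t_k\}$ is well defined, strictly increasing, and contained in $[0,t_*)$ is exactly the content of the discussion following Proposition~\ref{pr:2}: since $t_0=0\in[0,t_*)$ and $n_f$ maps $[0,t_*)$ into itself with $t<n_f(t)<t_*$, an induction gives all three properties at once, and monotone-bounded convergence gives a limit $t_\infty\le t_*$. The first equality in \eqref{eq:q.conv.f} is just the first identity of Proposition~\ref{pr:2} evaluated at $t=t_k$ (recalling $t_{k+1}=n_f(t_k)$). For the inequality $t_*-t_{k+1}\le \tfrac12(t_*-t_k)$, I would use Proposition~\ref{pr:scalar.1}: for $t\in[0,t_*)$ we have $f'(t)\le L(t-t_*)<0$, hence $-f'(t)\ge L(t_*-t)$, so
\[
\frac{L}{-2f'(t)}(t_*-t)^2\le \frac{L}{2L(t_*-t)}(t_*-t)^2=\frac{t_*-t}{2}.
\]
Passing to the limit in \eqref{eq:q.conv.f} forces $t_*-t_\infty\le\tfrac12(t_*-t_\infty)$, so $t_\infty=t_*$; thus the sequence converges to $t_*$, $Q$-linearly with rate $1/2$.

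The quadratic estimate under $2bL<1$ is where the real computation lies, and it is the step I expect to be the main obstacle. Here $f$ has two distinct roots $t_*<t_{**}$, and $f(t)=\tfrac{L}{2}(t-t_*)(t-t_{**})$, so $f'(t)=\tfrac{L}{2}\big((t-t_*)+(t-t_{**})\big)$. Plugging this into the exact identity $t_*-t_{k+1}=\dfrac{L}{-2f'(t_k)}(t_*-t_k)^2$ and simplifying, one finds that the error ratio $r_k:=(t_*-t_k)/(t_{**}-t_k)$ satisfies the clean recursion $r_{k+1}=r_k^2$, whence $r_k=r_0^{2^k}=\theta^{2^k}$ since $t_0=0$ gives $r_0=t_*/t_{**}=\theta$. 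Rewriting $-f'(t_k)$ in terms of $t_*-t_k$ and $r_k$, using $t_{**}-t_* = 2\sqrt{1-2bL}/L$, and substituting $r_k=\theta^{2^k}$ turns the factor $L/(-2f'(t_k))$ into exactly $\dfrac{1-\theta^{2^k}}{1+\theta^{2^k}}\cdot\dfrac{L}{2\sqrt{1-2bL}}$, which yields the first equality in \eqref{eq:q2f}; the final inequality is immediate since $\dfrac{1-\theta^{2^k}}{1+\theta^{2^k}}<1$. The only care needed is algebraic bookkeeping: keeping track of signs (everything in sight is positive because $0\le t_k<t_*<t_{**}$) and verifying the identity $t_{**}-t_*=2\sqrt{1-2bL}/L$ directly from \eqref{eq:scroots}.
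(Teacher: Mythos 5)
Your proposal is correct and follows essentially the same route as the paper: the linear part is handled exactly as in the paper (Proposition~\ref{pr:2} for the equality, $-f'(t)\ge L(t_*-t)$ for the factor $1/2$), and the quadratic part rests on the same ratio recursion $\bigl((t_*-t_{k+1})/(t_{**}-t_{k+1})\bigr)=\bigl((t_*-t_k)/(t_{**}-t_k)\bigr)^2$, which is precisely the computation of the paper's Appendix~A. The only (harmless) difference is that you convert $L/(-2f'(t_k))$ directly into $\frac{1-\theta^{2^k}}{1+\theta^{2^k}}\frac{L}{2\sqrt{1-2bL}}$ via the ratio $\theta^{2^k}$ and $t_{**}-t_*=2\sqrt{1-2bL}/L$, instead of first writing the closed formula for $t_k$ and then evaluating $f'(t_k)$ as the paper does.
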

\begin{proof}
  The first statement of the corollary have already been proved.
 
  Using  Proposition \ref{pr:2}, we have for any $k$
  \[
    t_*-n_f(t_{k})=\frac{L}{-2f'(t_k)}(t_*-t_k)^2,
  \]
  which combined with \eqref{eq:tknk}, yields the equality on
  \eqref{eq:q.conv.f}.  As $t_k\in [0, t_*)$, using Proposition
  \ref{pr:scalar.1} we have
  \[
  f'(t_k)\leq L(t_k-t_*)<0.
  \]
  The multiplication of the first above inequality by $(t_*-t_k)/(2f'(t_k))<0$
  yields the inequality in \eqref{eq:q.conv.f}.

  Now suppose that $2bL< 1$ or equivalently $t_*< t_{**}$.  A closed
  expression for $t_k$ is available ( see, e.g. [\cite{O96}, Appendix
  F], \cite{GT74}) see the Appendix A. In this case
  \[
  t_k=t_*- \frac{\theta^{2^k}}{1-\theta^{2^k}}\frac{2\sqrt{1-2bL}}{ L}, \qquad
  k=0,1,\,\cdots.
  \]
  From above equation we have that
  \[
  f'(t_k)=-\frac{1+\theta^{2^k}}{1-\theta^{2^k}}\frac{\sqrt{1-2bL}}{a}, \qquad
  k=0,1,\,\cdots.
  \]
  Therefore, to obtain the equality in \eqref{eq:q2f} combine the
  equality in \eqref{eq:q.conv.f} and latter equality.  As
  $(1-\theta^{2^k})/(1+\theta^{2^k})\leq 1$ the inequality in
  \eqref{eq:q2f} follows.
\end{proof}

\section{Simplifying assumption and convergence}

Newton method is invariant under (non-singular) linear
transformations.  This fact will be used to simplify our analysis.
We claim that it is enough to prove Theorem~\ref{th:knclass} for the case
$X=Y$ and $F'(x_0)=I$.  Indeed, if $F'(x_0)\neq I$, define
\[ G=F'(x_0)^{-1}F.\] Then, the domain, the roots, the domain of the
derivative and the points where the derivative is non-singular are the same for $F$ and $G$.  Moreover, Newton method
applied to $F(x)=0$ is equivalent to
Newton methods applied to $G(x)=0$, i.e., at the points where $F'(x)$
is nonsingular,
\[
 G'(x)^{-1}G(x)=F'(x)^{-1}F(x),
 \qquad x-G'(x)^{-1}G(x)=x-F'(x)^{-1}F(x).
\]
  Finally,
 $G$ will satisfy the \emph{same} assumptions wich $F$ satisfy.
So, from now one we assume
\begin{equation}
  \label{eq:simplify}
  X=Y,\qquad  F'(x_0)=I.
\end{equation}
Note that this assumption  simplifies conditions {\bf 2} and 
{\bf 3} of Theorem~\ref{th:knclass}.
\begin{proposition}
  \label{k1}
  If  $0\leq t<t_*$  and $x\in B(x_0, t)$, then
  $F'(x)$ is non-singular and
  \[
  \norm{  F'(x) ^{-1} }\leq
  1/\modulo{f'(t)}.
  \]
\end{proposition}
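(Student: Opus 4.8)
The plan is to use the classical Banach lemma on invertibility of operators close to the identity. Since the simplifying assumption \eqref{eq:simplify} gives $F'(x_0)=I$, I would write $F'(x) = I - (F'(x_0)-F'(x)) = I - (I - F'(x))$ and estimate $\norm{F'(x_0)-F'(x)}$ using hypothesis \textbf{2} of Theorem~\ref{th:knclass}, which under \eqref{eq:simplify} reads $\norm{F'(y)-F'(x)}\le L\norm{x-y}$. Taking $y=x_0$ and $x\in B(x_0,t)$ gives $\norm{F'(x_0)-F'(x)} \le L\norm{x-x_0} < Lt$.

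Next I would relate $Lt$ to $\modulo{f'(t)}$. From \eqref{eq:def.f} we have $f'(t) = Lt - 1$, and by Proposition~\ref{pr:scalar.1}, for $t\in[0,t_*)$ we have $f'(t) < 0$, so $\modulo{f'(t)} = 1 - Lt$. The Banach lemma requires $\norm{I - F'(x)} < 1$; here $\norm{I-F'(x)} < Lt \le 1$ whenever $t\le 1/L$, and since $t < t_* \le 1/L$ (Proposition~\ref{pr:scalar.1}) this is exactly the condition $1 - Lt > 0$, i.e.\ $\modulo{f'(t)} > 0$. So the Banach lemma applies and yields that $F'(x)$ is non-singular with
\[
\norm{F'(x)^{-1}} \le \frac{1}{1 - \norm{I-F'(x)}} \le \frac{1}{1 - Lt} = \frac{1}{\modulo{f'(t)}}.
\]

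The only subtlety — and the main thing to be careful about — is the strictness of the inequality $\norm{I - F'(x)} < 1$: we have $\norm{I-F'(x)} < Lt$ (strict, because $x\in B(x_0,t)$ is the \emph{open} ball) and $Lt < 1$ is not guaranteed in general, only $Lt \le 1$; but combining the strict bound $\norm{I-F'(x)} < Lt$ with $Lt \le 1$ still gives $\norm{I-F'(x)} < 1$, so the Banach lemma applies. Then the denominator estimate $1 - \norm{I-F'(x)} > 1 - Lt = \modulo{f'(t)}$ gives the stated bound after inverting. I would state the Banach perturbation lemma explicitly (or cite it) since it is the one external ingredient, and otherwise the argument is a two-line computation.
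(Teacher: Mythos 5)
Your proof is correct and takes essentially the same route as the paper: hypothesis \textbf{2} with $y=x_0$ and the normalization $F'(x_0)=I$ give $\norm{F'(x)-I}\le L\norm{x-x_0}<Lt$, Banach's lemma then yields invertibility with $\norm{F'(x)^{-1}}\le 1/(1-Lt)$, and $\modulo{f'(t)}=1-Lt$ finishes the argument. Your closing worry is unnecessary: since $t<t_*\le 1/L$ (Proposition~\ref{pr:scalar.1}) one has $Lt<1$ \emph{strictly} --- exactly as the paper notes --- so $1-Lt=\modulo{f'(t)}>0$ and the final division is safe.
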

\begin{proof}
  Recall that $t_*\leq 1/L$.  Hence, $0\leq t<1/L$. Using
  \eqref{eq:simplify} and assumption {\bf 2}, with $x$ and $x_0$ we
  have
  \[
  \norm{ F'(x)-I}\leq L t<1.
  \]
  Hence, using Banach's Lemma, we
  conclude that $ F'(x)$ is non-singular and
  \[   \norm{  F'(x)^{-1} }\leq 1/(1-Lt).\]
  To end the proof, use \eqref{eq:def.f}  to obtain
  $\modulo{f'(t)}=1-Lt$ for $0\leq t<t_*$.
\end{proof}
The error in the first order approximation of $F$ at point $x\in \mathrm{int}(C)$ can be
estimated in any $y\in C$, whenever the line segment with extreme
points $x,y$ lays in $C$.  Since balls are convex, we have:
\begin{proposition}
  \label{k2}
  If  $x\in B(x_0,R)$ and $y\in B[x_0,R]\subset C$, then
  \[   \norm{ F(y)-\big[F(x)+F'(x)(y-x)\big] }
       \leq \frac{L}{2}\|y-x\|^2.\]
\end{proposition}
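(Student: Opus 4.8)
The plan is to reduce the estimate to a one–dimensional integral along the segment joining $x$ and $y$, and then apply the Lipschitz hypothesis {\bf 2} on $F'$ (in its simplified form, since we have assumed $F'(x_0)=I$). First I would note that since $x\in B(x_0,R)$ and $y\in B[x_0,R]\subset C$, and balls are convex, the whole segment $x+s(y-x)$, $s\in[0,1]$, lies in $B[x_0,R]\subset C$; in particular $F$ is continuously differentiable on a neighbourhood of each interior point of this segment and continuous up to the endpoint, so the fundamental theorem of calculus is applicable to the vector-valued function $s\mapsto F(x+s(y-x))$. This gives
\[
F(y)-F(x)=\int_0^1 F'\big(x+s(y-x)\big)(y-x)\,ds,
\]
and subtracting the constant vector $F'(x)(y-x)=\int_0^1 F'(x)(y-x)\,ds$ yields
\[
F(y)-\big[F(x)+F'(x)(y-x)\big]=\int_0^1 \big[F'(x+s(y-x))-F'(x)\big](y-x)\,ds.
\]

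Next I would take norms, pull the norm inside the integral, and bound the integrand: for each $s\in[0,1]$ the point $x+s(y-x)$ and the point $x$ both lie in $C$, so hypothesis {\bf 2} together with \eqref{eq:simplify} gives $\norm{F'(x+s(y-x))-F'(x)}\le L\norm{s(y-x)}=Ls\norm{y-x}$, whence
\[
\norm{\big[F'(x+s(y-x))-F'(x)\big](y-x)}\le Ls\norm{y-x}^2.
\]
Integrating this over $s\in[0,1]$ produces $\int_0^1 Ls\,ds\,\norm{y-x}^2=\tfrac{L}{2}\norm{y-x}^2$, which is exactly the claimed bound.

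The only genuinely delicate point is justifying the integral representation of $F(y)-F(x)$ at the level of regularity assumed: $F$ is merely continuous on $C$ and only continuously differentiable on $\mathrm{int}(C)$, while the segment may touch the boundary of $C$ at the endpoint $y$. This is handled by applying the fundamental theorem of calculus on $[0,1-\varepsilon]$ — where the relevant portion of the segment sits inside a ball strictly contained in $C$, hence in $\mathrm{int}(C)$ if $R$ is chosen appropriately, or more robustly by using $B[x_0,R]\subset C$ and the openness of $C$ at interior segment points — and then letting $\varepsilon\downarrow 0$, using continuity of $F$ at $y$ and boundedness of the integrand to pass to the limit; alternatively one invokes the standard mean-value inequality for functions continuous on a closed segment and differentiable on its relative interior. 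I would simply cite the convexity of balls (as the statement already anticipates) and the standard integral form of Taylor's formula, since this technical care is routine. Everything else is the elementary estimate above.
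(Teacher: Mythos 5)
Your proof is correct, but it follows a different route from the one in the paper. You represent the remainder directly as the vector-valued integral $\int_0^1 \big[F'(x+s(y-x))-F'(x)\big](y-x)\,ds$, pull the norm inside, and apply the Lipschitz bound from assumption \textbf{2} (in its simplified form $F'(x_0)=I$); this is the standard Taylor-with-integral-remainder argument, and your care about the endpoint (the segment touches $\partial C$ only at $y$, so you integrate on $[0,1-\varepsilon]$ and use continuity of $F$ at $y$) is exactly the right technical point, since the relative interior of the segment lies in $B(x_0,R)\subset\mathrm{int}(C)$. The paper instead avoids integrating Banach-space-valued functions altogether: it picks, via Hahn--Banach, a norming functional $\xi\in X^*$ with $\norm{\xi}=1$ and $\xi(R(1))=\norm{R(1)}$ for the remainder $R(\theta)=F(x+\theta(y-x))-[F(x)+\theta F'(x)(y-x)]$, reduces to the scalar function $g(\theta)=\xi(R(\theta))$, bounds $g'(\theta)\le L\theta\norm{y-x}^2$ by assumption \textbf{2}, and integrates this real inequality from $g(0)=0$. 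What each approach buys: yours is the more familiar textbook computation but needs the (routine) facts that continuous $X$-valued functions are Riemann integrable and that $\norm{\smash{\int}}\le\int\norm{\cdot}$; the paper's scalarization needs only one-variable calculus after invoking Hahn--Banach, and it sidesteps any discussion of vector-valued integration (though it is, if anything, terser than you are about the regularity at $\theta=1$). Both arguments are sound and yield the same bound $\tfrac{L}{2}\normq{y-x}$.
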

\begin{proof}
  Define, for $\theta\in [0,1]$,
  \[ y(\theta)=x+\theta(y-x),\qquad 
     R(\theta)= 
         F(y(\theta))-\left[F(x)+F'(x)(y(\theta)-x) \right].
       \]
  We shall estimate $\norm{R(1)}$. From Hahn-Banach Theorem, there exists $\xi\in X^*$ such that
  \[ \norm{\xi}=1,\qquad \xi(R(1))=\norm{R(1)}.\]
  Define, for $\theta\in[0,1]$,
  \[ g(\theta)=\xi(R(\theta)).\]
 Direct calculation yields, for $\theta\in [0,1)$
  \[ \frac{dg}{d\theta}\,(\theta)
    =\xi\bigg(F'(y(\theta))
    -F'(x)\bigg).\]
  In particular, $g$ is $C^1$ on $[0,1)$.
  Using assumption {\bf 2}, we have
  \[ \frac{dg}{d\theta}\,
  (\theta)\leq L\theta\norm{y-x}.\]
  To end the prove, note that $\xi(R(0))=0$ and perform direct
  integration on the above inequality.
\end{proof}
Proposition \ref{k1}  guarantee 
non-singularity of $F'$, and so 
well definedness of Newton iteration map
for solving $F(x)=0$ in $B(x_0,t_*)$.  Let us call $N_{F}$ the Newton
iteration map (for $F(x)=0$) in that region
\begin{equation} \label{eq:def.NF}
  \begin{array}{rcl}
  N_F:B(x_0,t_*)&\to& X\\
    x&\mapsto& x-F'(x)^{-1}F(x).
  \end{array}
\end{equation}
One can apply a \emph{single} Newton iteration on any $x\in
B(x_0,t_*)$ to obtain $N_{F}(x)$
which may not belong
to $B(x_0,t_*)$,
 or even may not belong to the domain of $F$. 
To ensure that Newton
iterations  may be repeated indefinitely from $x_0$, we need some
additional results.

First, define some subsets of $B(x_0, t_*)$ in which, as we shall
prove, Newton iteration \eqref{eq:def.NF} is ``well behaved''.
\begin{align}\label{E:K}
K(t)&:=\left\{ x\in B[x_0,t] \, : \;  \norm{F(x)}\leq f(t)
   \right\},\qquad
   t\in [0,t_*)\,,\\
  \label{eq:def.K}
 K&:=\bigcup_{t\in[0,t_*)} K(t).
\end{align}
\begin{lemma}
  \label{lm:k}
  For any $t\in [0,t_*)$ and $x\in K(t)$,
  \begin{enumerate}
  \item $ \norm{F'(x)^{-1}F(x)}\leq -f(t)/f'(t)$,
  \item $\norm{x_0-N_F(x)}\leq n_f(t)$,
  \item $\norm{F(N_F(x))}\leq f(n_f(t))$.
  \end{enumerate}
  In particular,
  \[   N_F(K(t))\subset K(n_f(t)),\qquad \forall\; t\in [0,t_*)\]
  and $N_F$ maps $K$ in $K$, i.e., $ N_F(K)\subset K$.
\end{lemma}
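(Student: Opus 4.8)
The plan is to fix $t\in[0,t_*)$ and $x\in K(t)$, prove the three numbered estimates in that order, and then read off the two inclusions at the end. For item~1, note that $x\in K(t)\subseteq B[x_0,t]$ and $t<t_*\leq 1/L$, so $\norm{F'(x)-I}\leq L\norm{x-x_0}\leq Lt<1$; Banach's Lemma (this is exactly the computation in the proof of Proposition~\ref{k1}, now applied on the closed ball $B[x_0,t]$) then gives that $F'(x)$ is non-singular and $\norm{F'(x)^{-1}}\leq 1/(1-Lt)=1/\modulo{f'(t)}$. Multiplying by $\norm{F(x)}\leq f(t)$ and using $f(t)>0$, $f'(t)<0$ from Proposition~\ref{pr:scalar.1} yields $\norm{F'(x)^{-1}F(x)}\leq f(t)/\modulo{f'(t)}=-f(t)/f'(t)$.

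For item~2, since $N_F(x)-x=-F'(x)^{-1}F(x)$, the triangle inequality together with item~1 gives
\[
\norm{x_0-N_F(x)}\leq\norm{x_0-x}+\norm{F'(x)^{-1}F(x)}\leq t-\frac{f(t)}{f'(t)}=n_f(t),
\]
the last equality being the definition \eqref{eq:def.nfs} of $n_f$. By Proposition~\ref{pr:2} we have $t<n_f(t)<t_*$, so $N_F(x)\in B[x_0,n_f(t)]\subseteq B(x_0,t_*)$; in particular $N_F(x)$ lies in the domain of $F$, and $n_f(t)\in[0,t_*)$, so $K(n_f(t))$ is a well-defined set.

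Item~3 is the crux and comes down to one scalar identity. We apply Proposition~\ref{k2} with $R=n_f(t)$: this is legitimate because $\norm{x-x_0}\leq t<n_f(t)=R$, because $N_F(x)\in B[x_0,R]$ by item~2, and because $B[x_0,R]\subseteq B[x_0,t_*]\subseteq C$. Since $F(x)+F'(x)\bigl(N_F(x)-x\bigr)=F(x)-F'(x)F'(x)^{-1}F(x)=0$, Proposition~\ref{k2} reduces to $\norm{F(N_F(x))}\leq\tfrac{L}{2}\norm{N_F(x)-x}^2=\tfrac{L}{2}\norm{F'(x)^{-1}F(x)}^2$, which by item~1 is at most $\tfrac{L}{2}\bigl(f(t)/f'(t)\bigr)^2$. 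On the other hand, expanding the quadratic $f$ about $t$ exactly as in the proof of Proposition~\ref{pr:2} and substituting $n_f(t)-t=-f(t)/f'(t)$ gives $f(n_f(t))=f(t)+f'(t)(n_f(t)-t)+\tfrac{L}{2}(n_f(t)-t)^2=\tfrac{L}{2}f(t)^2/f'(t)^2$. Hence $\norm{F(N_F(x))}\leq f(n_f(t))$, which is item~3.

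Finally, items~2 and~3 together say precisely that $N_F(x)\in K(n_f(t))$ whenever $x\in K(t)$, i.e.\ $N_F(K(t))\subseteq K(n_f(t))$; and since $n_f$ maps $[0,t_*)$ into $[0,t_*)$ by Proposition~\ref{pr:2}, for any $x\in K$ we have $x\in K(t)$ for some $t\in[0,t_*)$, hence $N_F(x)\in K(n_f(t))\subseteq K$, which gives $N_F(K)\subseteq K$. The only point requiring a little care is the passage from the open ball of Proposition~\ref{k1} to the closed ball $B[x_0,t]$ underlying $K(t)$; this is handled either by re-running Banach's Lemma directly as above, or by applying Proposition~\ref{k1} with radius $t'\in(t,t_*)$ and letting $t'\downarrow t$ (using continuity of $f'$ and monotonicity of $\modulo{f'}$ on $[0,1/L)$). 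Everything else is bookkeeping plus the quadratic identity $f(n_f(t))=\tfrac{L}{2}f(t)^2/f'(t)^2$, which is the mechanism by which the scalar majorant controls the Newton step of $F$.
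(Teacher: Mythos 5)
Your proof is correct and follows essentially the same route as the paper: item 1 via the Banach-Lemma bound of Proposition~\ref{k1} together with $\norm{F(x)}\leq f(t)$, item 2 by the triangle inequality and the definition of $n_f$, item 3 by Proposition~\ref{k2} combined with the exact quadratic identity $f(n_f(t))=\tfrac{L}{2}\bigl(f(t)/f'(t)\bigr)^2$, and the inclusions read off from items 2--3 and Proposition~\ref{pr:2}. Your explicit handling of the open-ball hypothesis of Proposition~\ref{k1} versus the closed ball in the definition of $K(t)$ is a small point the paper glosses over, and your choice $R=n_f(t)$ instead of $R=t_*$ in Proposition~\ref{k2} is an inessential variation.
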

\begin{proof}
  Take $x\in K(t)$. Using Proposition \ref{k1} and \eqref{E:K}  we conclude that 
   $F'(x)$ is non-singular,
  \[  
   \norm{ F'(x)^{-1}}\leq 1/\modulo{f'(t)},
    \qquad
   \norm{F(x)}\leq f(t).
   \]
   Hence,
   \[ \norm{ F'(x)^{-1} F(x)}\leq
   \norm{ F'(x)^{-1}}\,  \norm{F(x)}
   \leq f(t)/\modulo{f'(t)},
   \]
   which combined with the inequality $f'(t)<0$ yields item 1.
   
   To prove item 2 use item 1,  triangular inequality and definition
   \eqref{eq:def.NF} to obtain 
   \[
   \norm{x_0-N_F(x)}\leq \norm{x_0-x}+\norm {F'(x)^{-1}F(x)}
   \leq t-f(t)/f'(t).
   \]
   To end the prove of item 2, combine the above equation with
   definition \eqref{eq:def.nfs}.

    From item 2 and Proposition \ref{pr:2},  $N_F(x)\in B(x_0,t_*)$.
    So,  Proposition~\ref{k2} implies
    \begin{eqnarray*}
       \norm{F(N_F(x))-\big[F(x)+F'(x)\left(N_F(x)-x\right)\big]}\leq
       \frac{L}{2}\normq{F'(x)^{-1}F(x)}.
    \end{eqnarray*}
    Note that by \eqref{eq:def.NF} 
    \[ F(x)+F'(x)\left(N_F(x)-x\right)=0.\]
    Combining last two equations, item 1 and identity $f(n_f(t))
    =L(f(t)/f'(t))^2/2$ (which follows from \eqref{eq:def.f} and \eqref{eq:def.nfs})
    we conclude that item 3 also holds. 

    Since $t<n_f(t)<t_*$ (Proposition \ref{pr:2}), using also items 2
    and 3 we have that 
    \[
    N_F(x)\in K(n_f(t)).
    \]
    As $x$ is an
    arbitrary element of $K(t)$, we have $N_F(K(t))\subset K(n_f(t))$.

   To prove the last inclusion, take $x\in
   K$.  Then $x\in K(t)$ for some $t\in[0,t_*)$, which readily implies
   $N_F(x)\in K(n_f(t))\subset K$. 
\end{proof}
The last inclusion in Lemma \ref{lm:k} shows that for any $x\in K$,
the sequence  $\{N_F^k(x)\}$,
\[ N_F^{0}(x)=x,\quad N_F^{k+1}(x)= N_F\left(N_F^k \left(x\right)\right),
  \qquad k=0,1,\cdots,
\]
is well defined and remains in $K$.  The assumptions of Theorem
\ref{th:knclass} guarantee
\begin{equation}
  \label{eq:x.in.k0}
  x_0\in K(0)\subset K.
\end{equation}
Therefore, the sequence $\{x_k=N_F^k(x_0)\}$ is well defined and
remains in $K$.  This sequence can be also defined as
\begin{equation}\label{eq:xknk}
  x_0=0,\quad x_{k+1}=N_F(x_k), \qquad k=0,1,\cdots.
\end{equation}
which happens to be the same sequence specified in \eqref{ns.KT},
Theorem~\ref{th:knclass}.
\begin{proposition}
  \label{pr:conv.xk}
  The sequence $\{x_k\}$ is well defined, is contained in
  $B(x_0,t_*)$  and
  \begin{equation}
    \label{eq:inducao}
     x_k\in K(t_k),\qquad k=0,1,\cdots.
  \end{equation}
  Moreover,  $\{x_k\}$ converges to a point $x_*\in B[x_0,t_*]$,
  \[
   \norm{x_*-x_k}\leq t_*-t_k,\qquad k=0,1,\cdots,
  \]
  and   $F(x_*)=0$.
\end{proposition}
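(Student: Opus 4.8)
The strategy is to prove the three displayed claims in order—well-definedness plus the inclusion $x_k\in K(t_k)$, then the convergence of $\{x_k\}$, then $F(x_*)=0$—leaning on Lemma~\ref{lm:k} and Corollary~\ref{cr:kanttk}, which already do most of the work on the scalar side. First I would establish \eqref{eq:inducao} by induction on $k$. The base case is \eqref{eq:x.in.k0}, namely $x_0\in K(0)=K(t_0)$. For the inductive step, assuming $x_k\in K(t_k)$ with $t_k\in[0,t_*)$, Lemma~\ref{lm:k} gives $N_F(x_k)\in K(n_f(t_k))$; but $n_f(t_k)=t_{k+1}$ by \eqref{eq:tknk} and $x_{k+1}=N_F(x_k)$ by \eqref{eq:xknk}, so $x_{k+1}\in K(t_{k+1})$, and $t_{k+1}\in[0,t_*)$ by Corollary~\ref{cr:kanttk}. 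Since $K(t_k)\subset B[x_0,t_k]\subset B(x_0,t_*)$, this simultaneously shows the sequence is well defined (each Newton step can be taken because $x_k$ lies in the domain $B(x_0,t_*)$ of $N_F$) and contained in $B(x_0,t_*)$.

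Next I would prove convergence. From $x_{k+1}=N_F(x_k)$ and item~1 of Lemma~\ref{lm:k} applied with $t=t_k$,
\[
\norm{x_{k+1}-x_k}=\norm{F'(x_k)^{-1}F(x_k)}\leq \frac{-f(t_k)}{f'(t_k)}=n_f(t_k)-t_k=t_{k+1}-t_k,
\]
using \eqref{eq:def.nfs} and \eqref{eq:tknk}. Thus the real series $\sum_k (t_{k+1}-t_k)$, which telescopes and converges to $t_*-t_0=t_*$ by Corollary~\ref{cr:kanttk}, dominates $\sum_k \norm{x_{k+1}-x_k}$; hence $\{x_k\}$ is Cauchy in the Banach space $X$ and converges to some $x_*$. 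Summing the bound from index $k$ onward gives $\norm{x_*-x_k}\leq \sum_{j\geq k}(t_{j+1}-t_j)=t_*-t_k$; in particular $\norm{x_*-x_0}\leq t_*$, so $x_*\in B[x_0,t_*]$.

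Finally, for $F(x_*)=0$: from item~3 of Lemma~\ref{lm:k} together with \eqref{eq:inducao} we get $\norm{F(x_{k+1})}=\norm{F(N_F(x_k))}\leq f(n_f(t_k))=f(t_{k+1})$, and more simply $\norm{F(x_k)}\leq f(t_k)$ directly from $x_k\in K(t_k)$ and \eqref{E:K}. Since $t_k\to t_*$ and $f$ is continuous with $f(t_*)=0$, we have $\norm{F(x_k)}\to 0$. Because $x_k\to x_*$ with all $x_k$ in $B(x_0,t_*)\subset B[x_0,t_*]\subset C$ and $F$ is continuous on $C$, $F(x_k)\to F(x_*)$, whence $F(x_*)=0$. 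The only mild subtlety to watch is that $x_*$ lies in $C$ so that $F(x_*)$ is defined and continuity applies—this is exactly why $B[x_0,t_*]\subset C$ is assumed; there is no real obstacle, the proof is a bookkeeping exercise assembling the scalar estimates of Corollary~\ref{cr:kanttk} with the three inclusions of Lemma~\ref{lm:k}.
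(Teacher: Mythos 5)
Your proof is correct and follows the paper's structure almost exactly: induction via Lemma~\ref{lm:k} for $x_k\in K(t_k)$, the bound $\norm{x_{k+1}-x_k}\leq t_{k+1}-t_k$ from item~1 of that lemma, telescoping of $\{t_k\}$ to get a Cauchy sequence and the estimate $\norm{x_*-x_k}\leq t_*-t_k$. The one place you genuinely diverge is the final step $F(x_*)=0$: the paper writes $F(x_k)=F'(x_{k-1})[x_{k-1}-x_k]$ and bounds $\norm{F'(x)}$ by $1+Lt_*$ on $B(x_0,t_*)$ (via assumption \textbf{2} and $F'(x_0)=I$) to conclude $F(x_k)\to 0$, whereas you read the bound $\norm{F(x_k)}\leq f(t_k)$ straight off the definition \eqref{E:K} of $K(t_k)$ and use $f(t_k)\to f(t_*)=0$. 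Your variant is slightly more economical --- it needs no bound on the derivative, only the continuity of $f$ and $t_k\to t_*$ --- and it better exploits the information already encoded in the sets $K(t)$; the paper's version, on the other hand, does not rely on the second component of the definition of $K(t_k)$ at that point, only on the Newton step identity. Both are sound, and your handling of the remaining details (the base case \eqref{eq:x.in.k0}, $K(t_k)\subset B[x_0,t_k]\subset B(x_0,t_*)$, and continuity of $F$ on $B[x_0,t_*]\subset C$) matches the paper.
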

\begin{proof}
  Well definedness of the sequence $\{x_k\}$ was already proved.  We
  also conclude that this sequence remains in $K$.  As $K\subset
  B(x_0,t_*)$ (see \eqref{E:K} and \eqref{eq:def.K}), $\{x_k\}$ also
  remains in $B(x_0,t_*)$.

  As $t_0=0$, the first inclusion in \eqref{eq:x.in.k0} can also be
  written as $x_0\in K(t_0)$.  So, \eqref{eq:inducao} holds for $k=0$.
  To complete the proof of \eqref{eq:inducao} use induction in $k$, 
  \eqref{eq:xknk}, Proposition~\ref{lm:k} and equation \eqref{eq:tknk}.

  Combining \eqref{eq:inducao} with item 1 of Lemma \ref{lm:k},
  \eqref{eq:xknk} and \eqref{eq:tknk} we obtain
  \begin{equation}
    \label{eq:norm.dx.dt}
    \norm{x_{k+1}-x_k}\leq t_{k+1}-t_k,\qquad k=0,1,\cdots.
  \end{equation}
  As $\{t_k\}$ converges and $\sum_{k=0} ^\infty t_{k+1}-t_k<\infty$ we conclude that
  $\{x_k\}$ is a Cauchy sequence.  So, $\{x_k\}$ converges to some
  $x_*\in B[x_0,t_*]$.  Moreover, 
  \eqref{eq:norm.dx.dt} implies
  \begin{equation}
    \label{eq:x0xkx*}
    \norm{x_*-x_k}\leq \sum_{j=k} ^\infty t_{j+1}-t_j=t_*-t_k,
    \qquad 
    \qquad k=0,1,\cdots.
  \end{equation}
  Note that
  \[  F(x_k)=F'(x_{k-1})[x_{k-1}-x_k].\]
  As $\norm{F'(x)}$ is bounded by $1+Lt_*$ in $B(x_0,t_*)$ last equation implies
  that
  \[ \lim_{k\to\infty} F(x_k)=0.\]
  Now, using the continuity of $F$ in $B[x_0,t_*]$ we have that
  $F(x_*)=0$.
\end{proof}

\section{Uniqueness and  convergence rate}
To prove uniqueness and estimate the convergence rate, another
auxiliary result will be needed.
\begin{proposition}
  \label{pr:rate.uniq}
  Take $x,y \in X$, $t,v\geq 0$. If
  \[
  \norm{x-x_0}\leq t<t_*,\quad  \norm{y-x_0}\leq R,\quad F(y)=0,
  \quad f(v)\leq 0
  \]
  and $B[x_0,R]\subset C$, then
  \[
  \norm{y-N_F(x)}
        \leq  
         \left[v-n_f(t)\right]\frac{\normq{y-x}}{(v-t)^2}.
  \]
\end{proposition}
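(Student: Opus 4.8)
The plan is to relate the quantity $y-N_F(x)$ to the first-order residual $F(x)+F'(x)(y-x)$, which is small precisely because $F(y)=0$, and then to control everything through the scalar majorant $f$. First, since $\norm{x-x_0}\le t<t_*$, Proposition~\ref{k1} guarantees that $F'(x)$ is non-singular with $\norm{F'(x)^{-1}}\le 1/\modulo{f'(t)}$, so $N_F(x)$ is well defined by \eqref{eq:def.NF} and, using $F(y)=0$,
\[
 y-N_F(x)=(y-x)+F'(x)^{-1}F(x)=F'(x)^{-1}\big[F(x)+F'(x)(y-x)-F(y)\big].
\]

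Next I would bound the bracketed term. To apply Proposition~\ref{k2} I need a common radius: put $R':=\max\{t_*,R\}$; since $B[x_0,t_*]\subset C$ (a hypothesis of Theorem~\ref{th:knclass}) and $B[x_0,R]\subset C$, also $B[x_0,R']\subset C$, while $x\in B(x_0,R')$ and $y\in B[x_0,R']$. Proposition~\ref{k2} then gives
\[
 \norm{F(x)+F'(x)(y-x)}=\norm{F(y)-\big[F(x)+F'(x)(y-x)\big]}\le\frac{L}{2}\normq{y-x},
\]
and combining this with the previous display and the bound on $\norm{F'(x)^{-1}}$ yields $\norm{y-N_F(x)}\le \dfrac{L}{2\modulo{f'(t)}}\normq{y-x}$.

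It then remains to prove the purely scalar inequality $\dfrac{L}{2\modulo{f'(t)}}\le\dfrac{v-n_f(t)}{(v-t)^2}$. Here I would exploit that $f$ is quadratic with leading coefficient $L/2$, so the exact identity $f(v)=f(t)+f'(t)(v-t)+\frac{L}{2}(v-t)^2$ holds; dividing its rearrangement by $f'(t)$ and using \eqref{eq:def.nfs} gives
\[
 v-n_f(t)=\frac{f(v)-\frac{L}{2}(v-t)^2}{f'(t)}=\frac{\frac{L}{2}(v-t)^2-f(v)}{\modulo{f'(t)}}\ge\frac{L}{2\modulo{f'(t)}}(v-t)^2,
\]
where I used $f'(t)<0$ (Proposition~\ref{pr:scalar.1}) and $f(v)\le 0$. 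Note $v-t>0$, since $f(v)\le 0$ with $v\ge 0$ forces $v\ge t_*>t$; dividing by $(v-t)^2$ and chaining the two estimates gives the claim.

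The only subtle point is the domain bookkeeping for Proposition~\ref{k2}: one must ensure the segment $[x,y]$ stays in $C$ even though $x$ is controlled by $t$ while $y$ is controlled by $R$, which is exactly why the auxiliary radius $R'=\max\{t_*,R\}$ is introduced. The rest is a one-line manipulation of the quadratic $f$.
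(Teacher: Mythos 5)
Your proof is correct and follows essentially the same route as the paper's: decompose $y-N_F(x)=F'(x)^{-1}\left[F(x)+F'(x)(y-x)\right]$ using $F(y)=0$, bound the two factors via Propositions~\ref{k1} and~\ref{k2}, and then establish the scalar estimate $v-n_f(t)\geq \frac{L}{2\modulo{f'(t)}}(v-t)^2$ from the exact quadratic expansion of $f$ together with $f'(t)<0$ and $f(v)\leq 0$. Your extra bookkeeping (the radius $\max\{t_*,R\}$ for Proposition~\ref{k2} and the observation that $f(v)\leq 0$ forces $v\geq t_*>t$, so division by $(v-t)^2$ is legitimate) only makes explicit details the paper leaves implicit.
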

\begin{proof}
Note from \eqref{eq:def.NF} 
\[
y-N_F(x)=F'(x)^{-1}[F(x)+F'(x)(y-x)].
\]
As $F(y)=0$,  using also Proposition \ref{k2} we obtain
  \[
 \norm{F'(x)^{-1}[F(x)+F'(x)(y-x)]}\leq 
  \frac{L}{2}\|y-x\|^2,
  \]
  and from Proposition  \ref {k1}
  \[ 
  \norm{F'(x)^{-1}}\leq 1/\modulo{f'(t)}.
  \]
  Combining these equations we have
  \[
    \norm{y-N_F(x)}\leq \frac{L}{2\modulo{f'(t)}} (v-t)^2\;
  \frac{\normq{y-x}}{ (v-t)^2}.
 \]
 As $f'(t)<0$ and $f(v)\leq 0$, using also \eqref{eq:def.nfs} we have
 \begin{align*}
   v-n_f(t)
   &=\frac{1}{-f'(t)} [-f(t)-f'(t)(v-t)]\\
    &\geq \frac{1}{\modulo{f'(t)}} [f(v)-f(t)-f'(t)(v-t)]=
 \frac{L}{2\modulo{f'(t)}}(v-t)^2.
 \end{align*}
 Combining the two above inequalities we obtain the desired result.
\end{proof}

\begin{corollary}
  \label{cr:rt.cv}
  If $y\in B[x_0,t_*]$ and $F(y)=0$, then
  \[     \|y-x_{k+1}\|\leq \frac{t_*-t_{k+1}}{(t_*-t_k)^2}\; 
  \|y-x_k\|^2, 
  \qquad \norm{y-x_k}\leq t_*-t_k,\qquad k=0,1,\cdots.
  \]
 In particular, $x_*$ is the unique zero of $F$ in $B[x_0,t_*]$.
\end{corollary}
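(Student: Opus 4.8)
The plan is to read off both displayed inequalities from Proposition~\ref{pr:rate.uniq} applied along the Newton sequence $\{x_k\}$, and then to deduce uniqueness by a limiting argument.

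First I would fix a zero $y\in B[x_0,t_*]$ of $F$ and apply Proposition~\ref{pr:rate.uniq} with $x=x_k$, $t=t_k$, $v=t_*$ and $R=t_*$. All of its hypotheses are at hand: $\norm{x_k-x_0}\leq t_k<t_*$ by Proposition~\ref{pr:conv.xk}, $\norm{y-x_0}\leq t_*=R$ since $y\in B[x_0,t_*]$, $F(y)=0$ by assumption, $f(t_*)=0\leq 0$ since $t_*$ is a root of $f$, and $B[x_0,t_*]\subset C$ is part of the hypotheses of Theorem~\ref{th:knclass}. Since $N_F(x_k)=x_{k+1}$ by \eqref{eq:xknk} and $n_f(t_k)=t_{k+1}$ by \eqref{eq:tknk}, the conclusion of Proposition~\ref{pr:rate.uniq} is exactly the first inequality, valid for every $k\geq 0$; note this step needs no induction.

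Next I would establish $\norm{y-x_k}\leq t_*-t_k$ by induction on $k$. For $k=0$ it reduces to $\norm{y-x_0}\leq t_*$ because $t_0=0$, which holds by the assumption on $y$. For the inductive step, substitute $\norm{y-x_k}\leq t_*-t_k$ into the first inequality to obtain
\[
\norm{y-x_{k+1}}\leq \frac{t_*-t_{k+1}}{(t_*-t_k)^2}\,(t_*-t_k)^2=t_*-t_{k+1},
\]
which closes the induction; here one only uses $t_k<t_*$ so the denominator does not vanish (Corollary~\ref{cr:kanttk}).

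Finally, for uniqueness, recall $t_k\to t_*$ (Corollary~\ref{cr:kanttk}), so the bound just proved forces $\norm{y-x_k}\to 0$, i.e.\ $x_k\to y$. Since $\{x_k\}$ also converges to $x_*$ by Proposition~\ref{pr:conv.xk}, we conclude $y=x_*$, so $x_*$ is the only zero of $F$ in $B[x_0,t_*]$. I do not expect a genuine obstacle: the only points that need care are the correct matching of the parameters $(t,v,R)=(t_k,t_*,t_*)$ in Proposition~\ref{pr:rate.uniq}, checking that each of its hypotheses holds along the sequence, and keeping the induction for $\norm{y-x_k}\leq t_*-t_k$ in lockstep with the recursion \eqref{eq:tknk} for $\{t_k\}$.
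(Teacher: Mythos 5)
Your proposal is correct and follows essentially the same route as the paper: apply Proposition~\ref{pr:rate.uniq} with $x=x_k$, $t=t_k$, $v=t_*$ (the paper gets $\norm{x_k-x_0}\leq t_k$ from $x_k\in K(t_k)$, which is the content of \eqref{eq:inducao} in Proposition~\ref{pr:conv.xk}), then the same induction for $\norm{y-x_k}\leq t_*-t_k$ and the same limiting argument $t_k\to t_*$, $x_k\to x_*$ to force $y=x_*$. No substantive difference.
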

\begin{proof}
  Take an arbitrary $k$. From Proposition~\ref{pr:conv.xk} we have $x_k\in K(t_k)$. So,  $\norm{x_k-x_0}\leq t_k$ and we can apply Proposition~\ref{pr:rate.uniq} with
  $x=x_k$,  $t=t_k$ and $v=t_*$, to obtain
  \[ \|y-N_F(x_{k})\|\leq [t_*-n_f(t_{k})] \;
  \frac{\|y-x_k\|^2}{(t_*-t_k)^2}.
  \]
  The first inequality now follows from the above inequality,
  \eqref{eq:xknk} and \eqref{eq:tknk}.

  We will prove the second inequality by induction.  For $k=0$ this
  inequality holds, because $y\in B[x_0,t_*]$ and $t_0=0$.  Now,
  assume that the inequality holds for some $k$,
  \[
  \norm{y-x_k}\leq t_*-t_k.
  \]
  Combining the above inequality with the first inequality of the
  corollary, we have that $\norm{y-x_{k+1}}\leq t_*-t_{k+1}$, wich
  concludes the induction.

  We already know that $x_*\in B[x_0,t_*]$ and $F(x_*)=0$.  Since $\{x_k\}$
  converges to $x_*$ and $\{t_k\}$ converges to $t_*$, using the
  second inequality of the corollary we conclude $y=x_*$.  Therefore,
  $x_*$ is the unique zero of $F$ in $B[x_0,t_*]$.
\end{proof}
\begin{corollary}
  \label{cr:lc}
  The sequences $\{x_k\}$ and $\{t_k\}$  satisfy 
  \begin{equation}  \label{eq:cla}
    \|x_*-x_{k+1}\|\leq \frac{t_*-t_{k+1}}{(t_*-t_k)^2}\; 
    \|x_*-x_k\|^2, \qquad  k=0,1,\cdots.
  \end{equation}
  In particular,
  \begin{equation} \label{eq:lc1}
    \|x_*-x_{k+1}\|\leqslant \frac{1}{2} \|x_*-x_k \|, 
    \qquad  k=0,1,\cdots.
  \end{equation}
  Additionally, if   $2bL< 1$ then
  \begin{equation} \label{eq:qc1}
    \|x_*-x_{k+1}\| \leq\frac{1-\theta^{2^k}}{1+\theta^{2^k}}
    \frac{ L}{2\sqrt{1-2bL}}\|x_*-x_k\|^2\leq
    \frac{ L}{2\sqrt{1-2bL}}\|x_*-x_k\|^2, 
    \quad k=0,1,\cdots.
  \end{equation}
\end{corollary}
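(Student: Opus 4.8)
The plan is to obtain everything as a specialization of Corollary~\ref{cr:rt.cv} to the particular zero $y=x_*$, combined with the scalar estimates already established in Corollary~\ref{cr:kanttk}. First I would note that Proposition~\ref{pr:conv.xk} guarantees $x_*\in B[x_0,t_*]$ and $F(x_*)=0$, so $x_*$ is an admissible choice of $y$ in Corollary~\ref{cr:rt.cv}. Making that choice immediately yields both \eqref{eq:cla} and the auxiliary bound $\norm{x_*-x_k}\leq t_*-t_k$ for every $k$.

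For \eqref{eq:lc1}, I would use the auxiliary bound $\norm{x_*-x_k}\leq t_*-t_k$ to absorb one factor of $\norm{x_*-x_k}$ in \eqref{eq:cla}, which gives $\norm{x_*-x_{k+1}}\leq \big[(t_*-t_{k+1})/(t_*-t_k)\big]\norm{x_*-x_k}$. The inequality in \eqref{eq:q.conv.f} of Corollary~\ref{cr:kanttk} states $t_*-t_{k+1}\leq \frac12(t_*-t_k)$, so the ratio in front is at most $1/2$ and \eqref{eq:lc1} follows.

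Finally, under the extra hypothesis $2bL<1$, I would substitute the closed-form value $t_*-t_{k+1}=\dfrac{1-\theta^{2^k}}{1+\theta^{2^k}}\dfrac{L}{2\sqrt{1-2bL}}(t_*-t_k)^2$ from \eqref{eq:q2f} directly into \eqref{eq:cla}; the factor $(t_*-t_k)^2$ cancels the denominator, producing the first inequality of \eqref{eq:qc1}, and the elementary bound $(1-\theta^{2^k})/(1+\theta^{2^k})\leq 1$ produces the second. I do not expect a genuine obstacle here: all of the analytic content (non-singularity of $F'$ near $x_0$, the quadratic majorization of the linearization error, Cauchyness and convergence of $\{x_k\}$, and the scalar Newton identities) has already been packaged into Propositions~\ref{k1}--\ref{pr:rate.uniq} and Corollaries~\ref{cr:kanttk}--\ref{cr:rt.cv}. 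The only points requiring a little care are checking that $y=x_*$ legitimately satisfies the hypotheses of Corollary~\ref{cr:rt.cv} and quoting the scalar identity/inequality from the correct line of Corollary~\ref{cr:kanttk}.
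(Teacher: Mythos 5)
Your proposal is correct and follows essentially the same route as the paper: apply Corollary~\ref{cr:rt.cv} with $y=x_*$ (justified by Proposition~\ref{pr:conv.xk}) to get \eqref{eq:cla}, then combine with the scalar estimates \eqref{eq:q.conv.f} and \eqref{eq:q2f} of Corollary~\ref{cr:kanttk}, together with $\norm{x_*-x_k}\leq t_*-t_k$, to obtain \eqref{eq:lc1} and \eqref{eq:qc1}. The only cosmetic difference is that you quote the bound $\norm{x_*-x_k}\leq t_*-t_k$ from Corollary~\ref{cr:rt.cv} while the paper cites Proposition~\ref{pr:conv.xk}; both are valid sources.
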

\begin{proof}
  According to Proposition~\ref{pr:conv.xk}, $x_*\in B[x_0,t_*]$ and
  $F(x_*)=0$.  To prove equation \eqref{eq:cla} apply
  Corollary~\ref{cr:rt.cv} with $y=x_*$.

  Note that, by \eqref{eq:q.conv.f} in Corollary~\ref{cr:kanttk},
   and Proposition~\ref{pr:conv.xk}, for any $k$
  \[
  (t_*-t_{k+1})/(t_*-t_k)\leq 1/2 \quad \mbox{ and } 
  \quad \|x_*-x_k\|/(t_*-t_k)\leq 1.
  \]
  Combining these inequalities with \eqref{eq:cla} we have
  \eqref{eq:lc1}.  Now, assume that $bL< 1/2$ holds. Then,
  \eqref{eq:q2f} in Corollary \ref{cr:kanttk} and \eqref{eq:cla} imply
  \eqref{eq:qc1} and the corollary is proved.
\end{proof}
\begin{corollary}
 \label{uniq}
 If  $2bL< 1$, $t_*\leq \rho <t_{**}$ and $B[x_0,\rho]\subseteq
 C$ then $x_*$ is the unique zero of $F$ in $B[x_0,\rho]$.
\end{corollary}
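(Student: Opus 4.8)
The plan is to establish uniqueness by showing that \emph{every} zero $y$ of $F$ in $B[x_0,\rho]$ coincides with $x_*$. Since $x_*\in B[x_0,t_*]\subseteq B[x_0,\rho]$ and $F(x_*)=0$ by Proposition~\ref{pr:conv.xk}, this simultaneously yields existence of a zero in $B[x_0,\rho]$ and its uniqueness. The mechanism is exactly the one used in Corollary~\ref{cr:rt.cv}: compare such a $y$ with the Newton sequence $\{x_k\}$ by iterating Proposition~\ref{pr:rate.uniq} along $k$. The only new ingredient is the choice of the scalar ``comparison radius'' $v$ in that proposition: rather than $v=t_*$, it will be taken strictly between $\norm{y-x_0}$ and $t_{**}$, and this is precisely where the hypothesis $2bL<1$ (equivalently $t_*<t_{**}$) enters.

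Concretely, let $y\in B[x_0,\rho]$ with $F(y)=0$. Since $2bL<1$ we have $t_*<t_{**}$, and since $\norm{y-x_0}\le\rho<t_{**}$ the open interval $\big(\max\{t_*,\norm{y-x_0}\},\,t_{**}\big)$ is nonempty; fix any $v$ in it. Then $v\ge t_*$ gives $f(v)\le 0$ (as $f$ vanishes at $t_*,t_{**}$ and is negative strictly between them), and $v>\norm{y-x_0}$; also $v>t_*>t_k$, so $v-t_k>0$. By Proposition~\ref{pr:conv.xk}, $\norm{x_k-x_0}\le t_k<t_*$, so Proposition~\ref{pr:rate.uniq} applied with $x=x_k$, $t=t_k$, this $v$, and $R=\rho$ (recalling $N_F(x_k)=x_{k+1}$ and $n_f(t_k)=t_{k+1}$) gives
\[
\norm{y-x_{k+1}}\ \le\ \big(v-t_{k+1}\big)\,\frac{\norm{y-x_k}^2}{(v-t_k)^2},\qquad k=0,1,\cdots.
\]
Setting $r_k:=\norm{y-x_k}/(v-t_k)\ge 0$, this reads $r_{k+1}\le r_k^{\,2}$, hence $r_k\le r_0^{\,2^k}$ for all $k$ by induction; since $r_0=\norm{y-x_0}/v<1$, we get $r_k\to 0$. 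As $0<v-t_k\le v$, it follows that $\norm{y-x_k}=r_k(v-t_k)\le r_0^{\,2^k}v\to 0$, i.e.\ $x_k\to y$. But $x_k\to x_*$, so $y=x_*$, which proves the corollary.

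The only genuinely delicate step is the choice of $v$. The naive attempt $v=\rho$ fails exactly when $\norm{y-x_0}=\rho$, because then $r_0=1$ and the recursion $r_{k+1}\le r_k^{\,2}$ no longer forces $r_k\to 0$; one must exploit the slack $\rho<t_{**}$ to obtain $r_0<1$, which is what triggers the doubly exponential decay. Once $v$ is fixed with the two properties $f(v)\le 0$ and $v>\norm{y-x_0}$, the remainder is a verbatim re-run of Corollary~\ref{cr:rt.cv} and introduces no new estimates.
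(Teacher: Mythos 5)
Your proof is correct, but it follows a genuinely different route from the paper's. The paper argues by localization: given a zero $y_*$ with $\norm{y_*-x_0}\leq\rho$, it applies Proposition~\ref{k2} at $x=x_0$ (using $F'(x_0)=I$) together with assumption \textbf{3} to get $\frac{L}{2}\norm{y_*-x_0}^2\geq\norm{y_*-x_0}-b$, i.e.\ $f(\norm{y_*-x_0})\geq 0$; since $\norm{y_*-x_0}\leq\rho<t_{**}$ and $f<0$ on $(t_*,t_{**})$, this forces $\norm{y_*-x_0}\leq t_*$, and then the already-proved uniqueness in $B[x_0,t_*]$ (Corollary~\ref{cr:rt.cv}) finishes the job. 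You instead re-run the iteration of Proposition~\ref{pr:rate.uniq} along the whole Newton sequence, but with the comparison radius $v$ chosen in $\bigl(\max\{t_*,\norm{y-x_0}\},t_{**}\bigr)$ rather than $v=t_*$, so that $r_k=\norm{y-x_k}/(v-t_k)$ satisfies $r_{k+1}\leq r_k^2$ with $r_0<1$, giving $x_k\to y$ and hence $y=x_*$; your checks of the hypotheses of Proposition~\ref{pr:rate.uniq} (with $R=\rho$, $t=t_k<t_*\leq\rho$, $f(v)\leq 0$, $v-t_k>0$) are all in order, and your observation that the slack $\rho<t_{**}$ is needed to guarantee $r_0<1$ (the naive $v=\rho$ fails when $\norm{y-x_0}=\rho$) is exactly the right point. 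The trade-off: the paper's argument is shorter and purely reuses existing results, needing only one application of Proposition~\ref{k2} at $x_0$; yours is the classical ``majorant with a larger root'' uniqueness argument, which is slightly longer but more informative, since it yields a quantitative doubly exponential rate $\norm{y-x_k}\leq r_0^{2^k}v$ at which the Newton iterates approach any zero in the larger ball, and it extends the mechanism of Corollary~\ref{cr:rt.cv} rather than reducing to it.
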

\begin{proof}
Assume that there exists $y_*\in C$ such that
$\norm{y_*-x_0}<\rho$ and $ F(y_*)=0.$  
Using Proposition~\ref{k2} with $x=x_0$ and $y=y_*$ (recall that
$F'(x_0)=I$ ) we obtain that
\[
\norm{F(x_0)+y-x_0} \leqslant\frac{L}{2}\norm{y-x_0}^2.
\]
Triangle inequality and assumption {\bf 3} of
Theorem~\ref{th:knclass} yield
\[
\norm{F(x_0)+y-x_0}\geq \norm{y-x_0}-\norm{F(x_0)}\geq \norm{y-x_0}-b.
\]
Combining the above inequalities we obtain
 \[
\frac{L}{2}\norm{y-x_0}^2\geq   \norm{y_*-x_0}-{b},
 \] 
which is equivalent to $f(\norm{y_*-x_0})\geq 0$. As $\norm{y_*-x_0} \leq\rho<t_{**}$ last inequality implies that $\norm{y_*-x_0}\leq t_*$.  Therefore, from Corollary \ref{cr:rt.cv} and assumption $F(y_*)=0$,  we conclude that $y_*=x_*$.
\end{proof}
Therefore, it follows from  Proposition \ref{pr:conv.xk},  
Corollary  \ref{cr:rt.cv},  Corollary \ref{cr:lc} and Corollary \ref{uniq} that all statements in Theorem \ref{th:knclass} are valid.
\subsection{Appendix: A closed formula for $t_k$}
Note that
$f(t)=(L/2)(t-t_*)(t-t_{**})$, and $f'(t)=(L/2)[(t-t*)+(t-t_{**})].$
Using the above equations and \eqref{eq:tknk}, 
\[
 t_{k+1}-t_*=(t_k-t_*)-\frac{(t_k-t_*)(t_k-t_{**})}{(t_k-t*)+(t_k-t_{**})}
 =\;\frac{(t_k-t_{*})^2}{(t_k-t*)+(t_k-t_{**})}.
\]
By similar manipulations, we have
\[
 t_{k+1}-t_{**}=\frac{(t_k-t_{**})^2}{(t_k-t*)+(t_k-t_{**})}.
\]
Combing two latter equality we obtain that
\[
\frac{t_{k+1}-t_{*}}{t_{k+1}-t_{**}}
=\left(\frac{t_{k}-t_{*}}{t_{k}-t_{**}}\right)^2.
\]
Suppose that $2bL<1$.  In this case, $t_*<t_{**}$. Hence, using the definition
$
 \theta:=t_*/t_{**}<1,
$
and induction  in $k$ we have
\[\frac{t_{k}-t_{*}}{t_{k}-t_{**}}=\theta^{2^{k}}.\]
After some algebraic manipulation in
above equality we obtain hat
\[
t_{k}=\frac{t_{**}\theta^{2^k}-t_{*}}{\theta^{2^k}-1}=t_*-
\frac{\theta^{2^k}}{1-\theta^{2^k}}\frac{2\sqrt{1-2bL}}{ L}, \qquad
k=0,1,\,\cdots.
\]

\end{document}